\journal{Topology and its Applications}
\newtheorem{definition}{Definition}[section]
\newtheorem{theorem}[definition]{Theorem}
\newtheorem{lemma}[definition]{Lemma}
\newtheorem{proposition}[definition]{Proposition}
\newtheorem{corollary}[definition]{Corollary}
\theoremstyle{definition}
\numberwithin{equation}{section}
\def\ps@pprintTitle{%
     \let\@oddhead\@empty
     \let\@evenhead\@empty
     \def\@oddfoot{
       \centerline{\thepage}}%
     \let\@evenfoot\@oddfoot}
\begin{document}
\begin{frontmatter}
\title{FIRST BAIRE CLASS FUNCTIONS IN THE PLURI-FINE TOPOLOGY}

\author[OD]{Oleksiy Dovgoshey\corref{cor1}}
\ead{aleksdov@mail.ru}

\author[MK]{Mehmet K\"{u}\c{c}\"{u}kaslan}
\ead{mkucukaslan@mersin.edu.tr}

\author[JR]{Juhani Riihentaus}
\ead{juhani.riihentaus@gmail.com}

\cortext[cor1]{Corresponding author}

\address[OD]{The Division of Applied Problems in Contemporary Analysis, \\ Institute of Mathematics of NASU, \\Tereschenkivska str. 3, Kyiv-4, 01601 Ukraine}
\address[MK]{Department of Mathematics, Faculty of Art and Sciences, \\Mersin University, Mersin 33342, Turkey}
\address[JR]{Department of Mathematical Sciences, University of Oulu, \\P. O. Box 3000, FI - 90014; \\ Department of Physics and mathematics, University of Eastern Finland, \\P. O. Box 111, FI - 80101, Joensuu, Finland}

\begin{abstract}
Let $B_{1}(\Omega, \mathbb R)$ be the first Baire class of real functions in the pluri-fine topology on an open set  $\Omega \subseteq \mathbb C^{n}$ and let $H_{1}^{*}(\Omega, \mathbb R)$ be the first functional Lebesgue class of real functions in the same topology. We prove the equality $B_{1}(\Omega, \mathbb R)=H_{1}^{*}(\Omega, \mathbb R)$  and show that for every $f\in B_{1}(\Omega, \mathbb R)$ there is a separately continuous function $g: \Omega^{2} \to\mathbb R$ in the pluri-fine topology on $\Omega^2$ such that $f$ is the diagonal of $g.$
\end{abstract}

\begin{keyword}
plurisubharmonic function \sep first Baire class function \sep separately continuous function \sep pluri-fine topology \sep first functional Lebesgue class function.
\MSC[2010] Primary 31C10 \sep Secondary 31C40, 54E52.
\end{keyword}

\end{frontmatter}

\section{Introduction }
The first Baire class functions is a classical object for the studies in the Real Analysis, General Topology and Descriptive Set Theory. There exist many interesting characterizations of these functions. Let us denote by $I$ the closed interval $[0, 1]$.

\begin{theorem}\label{t1.1}
The following conditions are equivalent for every $f : I \rightarrow I$.
\begin{enumerate}
\item The function $f$ is a Baire one function.
\item  There is a separately continuous function $g : I \times I \rightarrow I$ such that $f$ is the diagonal of $g$.
\item  Each nonvoid closed set $F \subseteq I$ contains a point $x$ such that the restriction $f|_{F}$ is continuous at $x$.
\item  The sets $f^{-1}(a, 1]$ and $f^{-1}[0, a)$ are $F_{\sigma}$ for every $a\in I$.
\item  For all $a$, $b \in I$ with $a < b$ and for every subset $F \subseteq I$, the sets $f^{-1}[0, a]$ and $f^{-1}[b, 1]$ cannot be simultaneously dense in $F$.
\end{enumerate}
\end{theorem}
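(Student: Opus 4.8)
The plan is to prove the equivalence of the five conditions through a web of implications resting on two classical pillars---Lebesgue's description of Baire one functions by their $F_\sigma$ preimages and the Baire category theorem---while handling the diagonal condition (ii) by a separate interpolation argument. Concretely I would establish (i)$\Leftrightarrow$(iv), then (iv)$\Leftrightarrow$(v), then (v)$\Leftrightarrow$(iii), and finally (i)$\Leftrightarrow$(ii); since $I$ is compact metric, hence Polish, all of the category machinery is available on each nonempty closed $F\subseteq I$.

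I would dispose of the routine links first. For (i)$\Rightarrow$(iv), if $f=\lim_n f_n$ pointwise with each $f_n$ continuous, then $f^{-1}(a,1]=\bigcup_{k}\bigcup_{N}\bigcap_{n\ge N}\{x: f_n(x)\ge a+1/k\}$ exhibits it as $F_\sigma$ (the inner intersection is closed because $f_n$ is continuous), and symmetrically for $f^{-1}[0,a)$. For (iv)$\Rightarrow$(v) I would argue by contradiction: passing to complements, $f^{-1}[0,a]$ and $f^{-1}[b,1]$ are $G_\delta$, so if both were dense in a nonempty closed $F$, then two dense $G_\delta$ subsets of the complete metric space $F$ would have to meet, contradicting $a<b$. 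The equivalence (v)$\Leftrightarrow$(iii) I would read off the oscillation function: if $f|_F$ had no point of continuity, the closed sets $\{x\in F:\operatorname{osc}(f|_F,x)\ge 1/k\}$ would cover $F$, so one contains a relatively open portion, and after subdividing the range into finitely many levels and applying Baire's theorem once more one finds $a<b$ for which $f^{-1}[0,a]$ and $f^{-1}[b,1]$ are both dense there---and the converse reads off the same way. Finally, for (ii)$\Rightarrow$(i), given a separately continuous $g$ I would let $h_n(x)$ be the value at $y=x$ of the piecewise-linear interpolation (in $y$) of the grid values $g(x,k/n)$; each $h_n$ is continuous, and $h_n(x)\to g(x,x)=f(x)$ because $y\mapsto g(x,y)$ is continuous.

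The genuine content lies in the links returning to (i). For (iv)$\Rightarrow$(i) I would invoke Lebesgue's approximation: since each $f^{-1}(a,1]$ and $f^{-1}[0,a)$ is $F_\sigma$, for every $n$ the function $f$ is within $1/n$ of a function constant on finitely many ambiguous ($F_\sigma$ and $G_\delta$) pieces, each such simple function is Baire one, and a uniform limit of Baire one functions is Baire one. The hardest step is (v)$\Rightarrow$(iv), the analytic heart of Baire's characterization: I would prove the separation (reduction) theorem that two sets $A,B$ for which no nonempty closed set has both $A$ and $B$ dense in it can be separated by an ambiguous set $D$ (with $A\subseteq D$ and $D\cap B=\emptyset$), the proof being a transfinite Cantor--Bendixson-type induction that repeatedly peels off the relatively open portions on which one of the sets fails to be dense. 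Applying this to $A=f^{-1}[0,a]$ and $B=f^{-1}[b,1]$---where (v) supplies exactly the hypothesis---produces ambiguous sets $D_{a,b}$ with $f^{-1}[0,a]\subseteq D_{a,b}\subseteq f^{-1}[0,b)$; intersecting over rational $b>a$ and unioning over $a$ then recovers that $f^{-1}[0,a]$ is $G_\delta$, which is (iv).

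It remains to produce (i)$\Rightarrow$(ii), the diagonal construction, which I regard as the second delicate point. Starting from $f=\lim_n f_n$ with $f_n$ continuous (truncated to take values in $I$), I would build $g(x,y)$ off the diagonal from the $f_n$ with the index governed by $|x-y|$, so that the index tends to infinity as $y\to x$, interpolated to remain continuous, and set $g(x,x)=f(x)$; continuity in $y$ for fixed $x$ then follows from $f_n(x)\to f(x)$. I expect the main obstacle to be precisely continuity in $x$ for fixed $y$ near the diagonal, where one evaluates $f_n(x)$ with the index blowing up: here pointwise (not uniform) convergence of the $f_n$ forces a careful, suitably smoothed choice of the interpolation so that the switching of indices does not destroy continuity. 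This construction, together with the transfinite separation theorem behind (v)$\Rightarrow$(iv), is where the real work concentrates; every remaining implication is bookkeeping with $F_\sigma$/$G_\delta$ sets and the Baire category theorem.
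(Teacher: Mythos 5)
The paper does not prove Theorem~\ref{t1.1} at all: it is quoted as classical background, with the proof delegated to Baire, Lebesgue, Hahn and to \cite[Theorem 2.12]{LMZ}. So your proposal has to be judged against the standard arguments rather than against anything in the text. Your web of implications is the standard one and is sound in outline: (i)$\Rightarrow$(iv) via $f^{-1}(a,1]=\bigcup_k\bigcup_N\bigcap_{n\geqslant N}\{f_n\geqslant a+1/k\}$; (iv)$\Rightarrow$(v) by Baire category applied to two disjoint dense $G_\delta$'s in $\overline F$; (v)$\Leftrightarrow$(iii) via the oscillation sets; (iv)$\Rightarrow$(i) by Lebesgue's approximation through ambiguous simple functions and the uniform-limit lemma; (v)$\Rightarrow$(iv) by the separation theorem for ambiguous sets followed by $f^{-1}[0,a]=\bigcap_{b>a,\ b\in\mathbb Q}D_{a,b}$; and (ii)$\Rightarrow$(i) by piecewise-linear interpolation on the grid $y=k/n$. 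You also correctly isolate the two steps carrying the real content.

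The one step that, as literally described, would fail is (i)$\Rightarrow$(ii). If the interpolation index is ``governed by $|x-y|$'' through fixed strip widths such as $1/(n+1)\leqslant|x-y|\leqslant 1/n$, then continuity of $x\mapsto g(x,y)$ at $x=y$ requires $f_n(x)\to f(y)$ along sequences with $x\to y$ and $n\sim 1/|x-y|\to\infty$ simultaneously, and pointwise convergence does not give this (take $f_n$ a bump of height $1$ centred at $1/n$ of width $1/n^3$: then $f_n\to 0$ pointwise but $g(1/n,0)=1$). No smoothing of the weights alone repairs this. The classical fix, which you should make explicit, is to calibrate the strips by the moduli of uniform continuity of the $f_n$ on the compact interval: choose $\delta_1>\delta_2>\cdots\to 0$ so that $|x-x'|\leqslant\delta_n$ implies $|f_k(x)-f_k(x')|\leqslant 1/n$ for all $k\leqslant n$, and on the strip $\delta_{n+1}\leqslant|x-y|\leqslant\delta_n$ interpolate linearly between $f_{n-1}(x)$ and $f_n(x)$. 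On that strip $g(x,y)$ then differs by at most $1/n$ from a convex combination of $f_{n-1}(y)$ and $f_n(y)$, which tends to $f(y)$, and separate continuity follows in both variables. With that correction, and with the transfinite separation argument behind (v)$\Rightarrow$(iv) actually written out, your plan yields a complete proof.
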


It is a classical result in the real functions theory that the diagonals of separately continuous functions of n variables are exactly the $(n-1)$ Baire class functions. See R.~Baire \cite{Ba} for the original proof in the case where $n = 2$,  and H.~Lebesgue \cite{Le1, Le2} and H.~Hahn \cite{Ha} for arbitrary $n\geqslant 2$.  A proof of the equivalence of (i), (iii), (iv) and (v) in the situation of a metrizable strong Baire space can be found, for example, in \cite[Theorem 2.12, p. 55]{LMZ}. The goal of our paper is to find similar characterizations of the first Baire class functions on the topological space $(\Omega, \tau),$ where $\Omega$ is an open subset of $\mathbb C^n$ and $\tau$ is the pluri-fine topology on $\Omega$. The pluri-fine topology $\tau$ is the coarsets topology on $\Omega$ such that all plurisubharmonic functions on $\Omega$ are continuous. The topology $\tau$ was introduced by B.~Fuglede in \cite{Fug} as a basis for a fine analytic structure in $\mathbb C^{n}$.  E.~Bedfor and B.~Taylor note in~\cite{BT} that the pluri-fine topology is Baire and has the quasi-Lindel\"{o}f property. S.~El.~Marzguioui and J.~Wiegerinck proved in \cite{MW1} that $\tau$ is locally connected and, consequently, the connected components of open sets are open in $\tau$ (see also \cite{MW2}). It should be noted that $\tau$ is not metrizable (see Corollary~\ref{c1.7} below). Thus, it is not clear whether the formulated above characterizations of the first Baire class functions are valid for $(\Omega, \tau).$

Let us recall some definitions.

Let $X$ be an arbitrary nonvoid set. For integer $m\geqslant 2$ the set $\Delta_{m}$ of all $m$-tuples $(x, ..., x), \, x\in X$, is by definition, the \emph{diagonal} of $X^m$. The mapping $d_m : X \rightarrow X^{m},\, d_{m}(x) = (x, ..., x)$, is called the \emph{diagonal mapping} and, for every function $f : X^{m} \rightarrow Y$, the composition $f \circ d_m$,
\begin{equation*}
X\ni x\longmapsto f(x, ..., x)\in Y
\end{equation*}
is, by definition, the \emph{diagonal} of $f$.

Let $X$ and $Y$ be topological spaces. A function $f : X \rightarrow Y$ is a \emph{first Baire class function} if there exists a sequence $(f_n)_{n\in\mathbb N}$ of continuous functions $f_n : X \rightarrow Y$ such that the limit relation
\begin{equation}\label{eq1.1}
f(x)=\lim_{n\to\infty}f_{n}(x)
\end{equation}
holds for every $x\in X$. Similarly, for an integer number $m\geqslant 2$, a function $f : X \rightarrow Y$ belongs to the \emph{$m$-Baire class functions}, if \eqref{eq1.1} holds with a sequence $(f_n)_{n\in\mathbb N}$ such that each of $f_n$ is in a Baire class less than $m$. A function $f : X \rightarrow Y$ is a \emph{first functional Lebesgue class function}, if for every open subset $G$ of the space $Y$, the inverse image $f^{-1}(G)$ is a countable union of functionally closed subsets of
$X.$ We will denote by $B_{1}(X, Y )$ (by $H_{1}^{*}(X, Y)$) the set of first Baire (first functional Lebesgue) class functions from $X$ to $Y$ and by $F_{\sigma}^{*}$ ($G_{\delta}^{*}$) the set of all countable unions (countable intersections) of functionally closed (functionally open) subsets of $X$. Thus
\begin{equation}\label{eq1.2}
(f\in H^{*}_{1}(X, Y))\Leftrightarrow (f^{-1}(G)\in F_{\sigma}^{*}\,\,\text{for all open}\,\,G\subseteq Y)
\end{equation}
\begin{equation*}
\Leftrightarrow (f^{-1}(F)\in G_{\delta}^{*}\,\,\text{for all closed}\,\,F\subseteq Y).
\end{equation*}
Recall that a subset $A$ of a topological space $X$ is \emph{functionally closed}, if there is a continuous function $f : X \rightarrow I$ such that $A = f^{-1}(0)$.

\begin{definition}\label{d1.2}
Let $\mu$ be a topology on the Cartesian product $X=\mathop{\prod}\limits_{i=1}^{m}X_{i}$ of nonvoid sets $X_1, ..., X_m,$ $m\geqslant 2$, and let $Y$ be a topological space. A function $f: X\to Y$ is called separately continuous if, for each $m$-tuple $(x_1, ..., x_m)\in X$, the restriction of $f$ to any of the sets
\begin{multline*}
\{(x, x_2, ..., x_m): x\in X_1\},\{(x_1, x, ..., x_m): x\in X_2\}, ...,\\
\{(x_1, ..., x_{m-1}, x): x\in X_m\}
\end{multline*}
is continuous in the subspace topology generated by $\mu$.
\end{definition}

If $a=(a_1, a_2, .., a_n)\in\mathbb C^{n}$, $b=(b_1, b_2, ..., b_n)\in\mathbb C^{n}$ and $z\in\mathbb C$, then we shall write $a+bz$ the for $n$-tuple $(a_1+b_{1}z,a_2+b_{2}z, ..., a_n+b_{n}z)$.

\begin{definition}\label{d1.3}
Let $\mathbb C^{n}$, $\mathbb C$ and $[-\infty, \infty)$ have the Euclidean topologies, $n\geqslant 1$ and let $\Omega\subseteq \mathbb C^n$ be a non-void open set. A function $f: \Omega\to [-\infty, \infty)$ is plurisubharmonic (psh) if $f$ is upper semicontinuous and, for all $a$, $b \in \mathbb C^n$, the function
\begin{equation*}
\mathbb C\ni z\longmapsto f(a+bz)\in[-\infty, \infty)
\end{equation*}
is subharmonic or identically $-\infty$ on every component of the set
$$
\{z\in \mathbb C\colon a+bz\in \Omega\}.
$$
\end{definition}

In what follows, $\tau$ denotes the pluri-fine topology on $\Omega$, i.e., the coarsest topology in which all psh functions are continuous.

As was noted in \cite{MW1}, many results related to the classical fine topology which was introduced by H.~Cartan are valid for the pluri-fine topology. For example, $\tau$ is Hausdorff and completely regular. It is well known that Cartan's fine topology is not metrizable and all compact sets are finite in this topology. The topology $\tau$ also has these properties.

Let $\pi_j: \Omega^{m}\to \Omega$ be the $j$-th projection of $\Omega^{m}$ on $\Omega$, $j\in \{1,...,m\}$.
We identify $\Omega^{m}$ with the corresponding subset of $\mathbb C^{mn}$ and denote by $\tau_{m}$ the pluri-fine topology on $\Omega^{m}$.

\begin{lemma}\label{l2.4}
All projections $\pi_{j}: (\Omega^{m}, \tau_{m})\to (\Omega, \tau)$, $j=1, ..., m$ are continuous.
\end{lemma}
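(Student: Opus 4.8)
The plan is to reduce the continuity of each $\pi_j$ to a pluri\-subharmonicity statement. Since $\tau$ is by definition the coarsest topology on $\Omega$ making every psh function continuous, the sets $u^{-1}(W)$, with $u$ psh on $\Omega$ and $W$ open in $[-\infty,\infty)$, form a subbasis of $\tau$. Hence $\pi_j$ is $(\tau_m,\tau)$-continuous as soon as $\pi_j^{-1}(u^{-1}(W))=(u\circ\pi_j)^{-1}(W)$ is $\tau_m$-open for every such $u$ and $W$, i.e.\ as soon as $u\circ\pi_j$ is $\tau_m$-continuous for every psh $u$ on $\Omega$. By the analogous definition of $\tau_m$, every psh function on $\Omega^m$ is automatically $\tau_m$-continuous, so it suffices to prove the central claim: for each psh $u:\Omega\to[-\infty,\infty)$ the composition $u\circ\pi_j$ is psh on $\Omega^m$, regarded as an open subset of $\mathbb C^{mn}$.

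To verify this claim I would check the two defining properties of Definition~\ref{d1.3}. Upper semicontinuity (in the Euclidean topologies) is immediate, since $\pi_j$ is a continuous linear surjection $\mathbb C^{mn}\to\mathbb C^n$ and the composition of an upper semicontinuous function with a continuous one is upper semicontinuous. For the one-dimensional condition, fix $a=(a^{(1)},\dots,a^{(m)})$ and $b=(b^{(1)},\dots,b^{(m)})$ in $\mathbb C^{mn}$ with $a^{(k)},b^{(k)}\in\mathbb C^{n}$. Then $\pi_j(a+bz)=a^{(j)}+b^{(j)}z$, so that $(u\circ\pi_j)(a+bz)=u(a^{(j)}+b^{(j)}z)$, while the set $S:=\{z\in\mathbb C: a+bz\in\Omega^m\}$ is an open subset of $S_j:=\{z\in\mathbb C: a^{(j)}+b^{(j)}z\in\Omega\}$.

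Since $u$ is psh, $z\mapsto u(a^{(j)}+b^{(j)}z)$ is subharmonic or identically $-\infty$ on each component of $S_j$. Every component $C$ of $S$ is contained in a single component $C_j$ of $S_j$, and restricting a subharmonic (resp.\ identically $-\infty$) function to the connected open subset $C\subseteq C_j$ leaves it subharmonic (resp.\ identically $-\infty$); thus $z\mapsto(u\circ\pi_j)(a+bz)$ has the required behaviour on each component of $S$. This establishes the claim and hence the lemma. The only point demanding care is this last passage from components of $S_j$ to components of $S$, which is precisely where the ``on every component'' clause in the definition of psh must be invoked rather than a global subharmonic-or-$(-\infty)$ dichotomy.
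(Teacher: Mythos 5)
Your proof is correct and follows essentially the same route as the paper's: both reduce the continuity of $\pi_j$ to the $\tau_m$-continuity of $u\circ\pi_j$ for every psh $u$, and hence to the plurisubharmonicity of $u\circ\pi_j$ on $\Omega^m$. The only difference is that the paper obtains this last fact by citing the general result that the composition of an analytic map (here the projection) with a psh function is psh, whereas you verify it directly from Definition~\ref{d1.3}.
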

\begin{proof} Let $Y$ be a topological space.
It follows form a general result on the continuity of the mappings to a topological space $X$ with a topology generated by a family $\mathfrak{F}$ of functions $f$ on $X$ (see \cite[p.~31]{En}), that $\psi\colon Y\to X$ is continuous if and only if the composition $f \circ\psi$ is continuous for every $f\in\mathfrak{F}$. Hence, we need show that the functions
\begin{equation}\label{eq2.3}
\Omega^{m} \stackrel{\pi_j }{\longrightarrow} \Omega\stackrel{ f }{\longrightarrow} [-\infty, \infty)
\end{equation}
are continuous in the topology $\tau_{m}$ for every psh function $f$. Note that all projections $\pi_j$ are analytic. Consequently, in \eqref{eq2.3} we have a composition of analytic function with psh function. Since such compositions are psh (see, for example, \cite[p.~228]{Ho}), they are continuous by the definition of pluri-fine topology.
\end{proof}
Substituting $\mathbb C$ instead of $\Omega$ and $n$ instead of $m$ we obtain the following
\begin{corollary}\label{l1.4}
All projections $\pi_{j}: \mathbb C^{n}\rightarrow\mathbb C$, $ j=1, ..., n$, are conti\-nuous mappings with respect to the pluri-fine topologies on $\mathbb C^n$ and $\mathbb C$.
\end{corollary}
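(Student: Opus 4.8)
The plan is to obtain this statement as a direct specialization of Lemma~\ref{l2.4}, exploiting the fact that $\mathbb C$ is itself an open subset of $\mathbb C^{1}$ and therefore may play the role of the open set $\Omega$ appearing in that lemma. Concretely, I would take the ambient dimension in Lemma~\ref{l2.4} to be $1$, set $\Omega := \mathbb C$, and choose the number of factors to be $m := n$. With these choices the Cartesian power $\Omega^{m}$ becomes $\mathbb C^{n}$, which under the identification of $\Omega^{m}$ with a subset of $\mathbb C^{mn} = \mathbb C^{n}$ is simply all of $\mathbb C^{n}$, and the projections $\pi_{j}: \Omega^{m} \to \Omega$ become exactly the coordinate projections $\pi_{j}: \mathbb C^{n} \to \mathbb C$.

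The one point to verify before the substitution is legitimate is that the two topologies match up: the topology $\tau_{m}$ supplied by Lemma~\ref{l2.4} on $\Omega^{m} = \mathbb C^{n}$ must coincide with the pluri-fine topology on $\mathbb C^{n}$ regarded as an open subset of itself, and likewise $\tau$ on $\Omega = \mathbb C$ must be the pluri-fine topology on $\mathbb C$. Both identifications are immediate from the definition of the pluri-fine topology, which depends only on the family of psh functions on the set in question; since $\mathbb C^{n}$ is open in $\mathbb C^{n}$ and $\mathbb C$ is open in $\mathbb C = \mathbb C^{1}$, the relevant families of psh functions, and hence the coarsest topologies rendering them continuous, are precisely the pluri-fine topologies named in the statement.

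Once these identifications are in place, the conclusion of Lemma~\ref{l2.4} reads verbatim as the continuity of each $\pi_{j}: \mathbb C^{n} \to \mathbb C$ with respect to the pluri-fine topologies, which is exactly the assertion to be proved. I do not expect any genuine obstacle here: the corollary is a pure specialization, and the only thing demanding care is the bookkeeping of the two roles played by the letter $n$, namely the dimension $1$ of the ambient space of $\Omega = \mathbb C$ on the one hand, and the number $n$ of factors in $\mathbb C^{n}$ on the other.
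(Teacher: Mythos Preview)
Your proposal is correct and is exactly the paper's own argument: the corollary is obtained by substituting $\mathbb C$ for $\Omega$ and $n$ for $m$ in Lemma~\ref{l2.4}. Your additional remarks verifying that the topologies $\tau_{m}$ and $\tau$ specialize to the pluri-fine topologies on $\mathbb C^{n}$ and $\mathbb C$ are accurate and make explicit what the paper leaves implicit.
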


\begin{proposition}\label{p1.5}
Let $\Omega$ be a non-void open subset of $\mathbb C^n$ and let $A$ be a compact set in $(\Omega, \tau)$. Then $A$ is finite, $|A|<\infty.$
\end{proposition}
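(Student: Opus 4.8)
The plan is to reduce the statement to the fact that a convergent sequence together with its limit is discrete in the pluri-fine topology, and then to play discreteness against compactness.

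First I would observe that $\tau$ is finer than the Euclidean topology on $\Omega$: the functions $\pm\operatorname{Re} z_j$ and $\pm\operatorname{Im} z_j$ are pluriharmonic, hence psh, so each coordinate function is $\tau$-continuous and every Euclidean-open set is $\tau$-open. Consequently the identity map $(\Omega,\tau)\to\Omega$ (with the Euclidean topology) is continuous, so the $\tau$-compact set $A$ is Euclidean-compact, i.e. closed and bounded in $\mathbb C^n$. Assume, for contradiction, that $A$ is infinite. An infinite subset of a Euclidean-compact set has a Euclidean accumulation point, which lies in $A$ because $A$ is Euclidean-closed; call it $a^*$. Choose distinct points $a_k\in A\setminus\{a^*\}$ with $a_k\to a^*$ in the Euclidean sense and put $S=\{a^*\}\cup\{a_k:k\in\mathbb N\}$. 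Since $S$ is Euclidean-closed it is $\tau$-closed, hence $\tau$-compact as a closed subset of the $\tau$-compact set $A$.

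Next I would show that $S$ is discrete in the subspace $\tau$-topology, which is impossible for an infinite $\tau$-compact set (a discrete space is compact only if finite) and thus yields the contradiction. For each fixed $k$ the point $a_k$ is already isolated in $S$ for the Euclidean topology --- the only Euclidean accumulation point of $S$ is $a^*\neq a_k$ --- and hence isolated for the finer topology $\tau$. The whole difficulty is therefore concentrated at the single point $a^*$: I must produce a $\tau$-open set $V$ with $V\cap S=\{a^*\}$, that is, I must show that the tail $\{a_k\}$ is \emph{thin} at $a^*$ in the pluri-fine sense.

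To obtain such a $V$ I would exploit that countable sets are pluripolar and construct a psh function that is $-\infty$ on the tail but finite at $a^*$. Recall that $z\mapsto\log|z-a|$ is psh on $\mathbb C^n$ (it is the upper envelope of the psh functions $\log|\langle z-a,w\rangle|$ over unit vectors $w$). Fix a Euclidean ball $B$ with $a^*\in B$ and $\overline B\subseteq\Omega$, so that $\log|z-a_k|\le M$ on $B$ for some constant $M$ and all $k$, and discard the finitely many $a_k\notin B$. Choosing positive coefficients $c_k$ with $\sum_k c_k<\infty$ and $\sum_k c_k|\log|a^*-a_k||<\infty$, the partial sums of $\sum_k c_k(\log|z-a_k|-M)$ form a decreasing sequence of psh functions on $B$ whose limit $u$ satisfies $u(a^*)>-\infty$ (by the summability of the coefficients) and $u(a_k)=-\infty$ for every $a_k\in B$; in particular $u\not\equiv-\infty$, so $u$ is psh on $B$. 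Since $B$ is Euclidean-open, hence $\tau$-open, and the pluri-fine topology is local, the set $V=\{z\in B:u(z)>u(a^*)-1\}$ is $\tau$-open; it contains $a^*$ and no $a_k\in B$, and after intersecting with the (Euclidean-open) complement of the finitely many discarded points we obtain $V\cap S=\{a^*\}$.

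I expect the main obstacle to be the psh construction in the last paragraph: one must choose the coefficients $c_k$ so that the series genuinely defines a plurisubharmonic function that is finite at $a^*$ while keeping the value $-\infty$ on the tail, and one must justify that fine-openness may be tested locally on $B$ (equivalently, that the restriction of $\tau$ to the Euclidean-open set $B$ is the pluri-fine topology of $B$). Everything else --- the reduction to Euclidean compactness, the $\tau$-closedness of $S$, and the compactness-versus-discreteness contradiction --- is routine once this thinness statement is in hand.
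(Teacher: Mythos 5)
Your argument is correct, but it takes a genuinely different route from the paper's. The paper first reduces to the case $\Omega=\mathbb C^{n}$ (restrictions of psh functions on $\mathbb C^{n}$ are psh on $\Omega$), then uses the continuity of the coordinate projections $\pi_{j}\colon(\mathbb C^{n},\tau)\to(\mathbb C,\tau)$ from Corollary~\ref{l1.4} to conclude that each $A_{j}=\pi_{j}(A)$ is compact in the fine topology of $\mathbb C$; since for $n=1$ the pluri-fine topology is Cartan's classical fine topology, in which compact sets are known to be finite, the inequality $|A|\leqslant\prod_{j}|A_{j}|$ finishes the proof. You instead reprove the underlying potential-theoretic fact directly in $\mathbb C^{n}$: a Euclidean-convergent sequence together with its limit is $\tau$-discrete, because a weighted sum of the psh kernels $\log|z-a_{k}|$ with suitably summable coefficients is finite at the limit point and $-\infty$ on the tail, so the tail is thin at $a^{*}$ in the pluri-fine sense; an infinite discrete compact set then yields the contradiction. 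The paper's route is shorter and delegates the hard analysis to a cited classical one-variable result (plus Lemma~\ref{l2.4} on projections), while yours is self-contained and exhibits explicitly \emph{why} compactness fails, at the price of the two technical points you correctly flag: that the decreasing partial sums converge to a psh function finite at $a^{*}$ (your choice of coefficients does secure this, since each $\log|a^{*}-a_{k}|$ is finite), and that pluri-fine openness may be tested on the Euclidean-open ball $B$. The latter is the only step needing real care; it is the standard local-determination property of the pluri-fine topology and can be supplied by gluing $u$ with a global psh function of the form $\alpha|z-a^{*}|^{2}+\beta$ chosen to dominate $u$ near $\partial B$ while staying below the level $u(a^{*})-1$ on a small ball around $a^{*}$. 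With that supplied, your proof is complete.
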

\begin{proof}
If $f$ is a psh function on $\mathbb C^n$, then the restriction $f|_{\Omega}$ is psh on $\Omega$. Hence it is sufficient to show that $|A|<\infty$ for the case $\Omega = \mathbb C^n$. By Corrolary \ref{l1.4} every projection $\pi_{j}$ is continuous. Hence the sets $A_{j}=\pi_{j}(A)$, $j=1, ..., n$, are compact. As was mentioned above every compact set in $(\mathbb C, \tau)$ is finite. Consequently, we have $|A_{j}|<\infty$, $j=1, \ldots, n$. These inequalities and $|A|\leqslant\mathop{\prod}\limits_{j=1}^{n}|A_j|$ imply that $A$ is finite.
\end{proof}

\begin{proposition}\label{p1.6}
Let $\Omega$ be a non-void open subset of $\mathbb C^n$, $n\geqslant 1$. The pluri-fine topology $\tau$ is not first-countable for any $n$.
\end{proposition}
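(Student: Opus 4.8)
The plan is to combine the finiteness of $\tau$-compact sets (Proposition~\ref{p1.5}) with the Hausdorff property of $\tau$ to show that $\tau$-convergent sequences are essentially trivial, then to exhibit a point that is not $\tau$-isolated, and finally to contradict first-countability at that point by manufacturing a nonconstant convergent sequence. Almost all of this is soft point-set topology resting on Proposition~\ref{p1.5}; the one genuinely analytic input is the non-triviality of the pluri-fine topology, which I reduce to a one-variable statement.

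First I would record the key consequence of Proposition~\ref{p1.5}: \emph{every $\tau$-convergent sequence in $\Omega$ is eventually constant}. Indeed, if $x_k\to a$ in $\tau$, then $K=\{a\}\cup\{x_k:k\in\mathbb N\}$ is $\tau$-compact, hence finite by Proposition~\ref{p1.5}. Since $\tau$ is Hausdorff, and in particular $T_1$, a convergent sequence taking only finitely many values must stabilize at its limit: any value $c\neq a$ occurring infinitely often would give a constant subsequence converging to $a$, forcing $c=a$ by $T_1$-separation.

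Next I would verify that $(\Omega,\tau)$ has no isolated points, i.e.\ that no singleton $\{a\}$ is $\tau$-open. Suppose, to the contrary, that $\{a\}$ is $\tau$-open for some $a\in\Omega$. Since the sets $f^{-1}(U)$ with $f$ psh and $U\subseteq[-\infty,\infty)$ Euclidean-open form a subbase of $\tau$, there are psh functions $f_1,\dots,f_m$ on $\Omega$ and open sets $U_1,\dots,U_m$ with $\{a\}=\bigcap_{j=1}^{m}f_j^{-1}(U_j)$. Fixing any $b\in\mathbb C^{n}\setminus\{0\}$ and letting $D$ be the connected component of $\{z\in\mathbb C:a+bz\in\Omega\}$ containing $0$, Definition~\ref{d1.3} guarantees that each $g_j(z)=f_j(a+bz)$ is subharmonic or identically $-\infty$ on $D$. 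As $z\mapsto a+bz$ is injective and $a+bz=a$ only at $z=0$, restricting to the line yields $\{0\}=\bigcap_{j=1}^{m}g_j^{-1}(U_j)$. Because subharmonic and constant functions are continuous in the classical (Cartan) fine topology of $\mathbb C$, this displays $\{0\}$ as a finely open set, contradicting the fact that a deleted neighbourhood of $0$ is not thin at $0$. Hence $(\Omega,\tau)$ is not discrete.

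Finally I would assemble the conclusion. Assuming $\tau$ is first-countable, I pick a non-isolated point $a$ (available by the previous paragraph) and a decreasing countable neighbourhood base $(U_k)_{k\in\mathbb N}$ at $a$. Choosing $x_k\in U_k\setminus\{a\}$ produces a sequence with $x_k\to a$ and $x_k\neq a$ for every $k$; by the first step this sequence must be eventually constant equal to its limit $a$, contradicting $x_k\neq a$. Therefore $\tau$ is not first-countable. The hard part is the no-isolated-points claim of the third paragraph, which is exactly where potential theory enters through the reduction to one complex variable and the classical fact that the fine topology on $\mathbb C$ has no finely isolated points; the surrounding topological scaffolding is routine.
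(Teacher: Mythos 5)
Your proof is correct and its core is the same as the paper's: first-countability at a non-isolated point yields a nonconstant convergent sequence, whose range together with its limit is an infinite compact set, contradicting Proposition~\ref{p1.5}. (Your detour through the claim that convergent sequences are eventually constant is just a repackaging of that contradiction.) The genuine difference is that you actually prove the non-discreteness of $(\Omega,\tau)$, whereas the paper simply asserts ``Since $(\Omega,\tau)$ is not discrete'' with no justification. Your argument for this --- writing a hypothetical open singleton as a finite intersection of subbasic sets $f_j^{-1}(U_j)$ with $f_j$ psh, restricting along a complex line through the point as permitted by Definition~\ref{d1.3}, and invoking the classical fact that the fine topology on $\mathbb C$ has no finely isolated points --- is sound, and it is exactly the one analytic ingredient the paper leaves implicit; everything else on both sides is soft point-set topology. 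In that respect your write-up is more complete than the paper's own proof (which also, incidentally, miscites Proposition~\ref{p1.5} as ``Proposition 1.6'').
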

\begin{proof}
Suppose, contrary to our claim, that $\tau$ is first-countable. The topology $\tau$ is Hausdorff.  Since $(\Omega, \tau)$ is not discrete, $\Omega$ contains an accumulation point $a$ which is the limit of a non-constant sequence $(a_k)_{k\in\mathbb N}$ of points of $\Omega$. It is clear that the set
$$
A=\{a\}\cup\left(\bigcup_{k=1}^{\infty}\{a_k\}\right)
$$
is an infinite compact subset of $\Omega$. The last statement contradicts Proposition 1.6.
\end{proof}

\begin{corollary}\label{c1.7}
The pluri-fine topology $\tau$ on non-void open $\Omega \subseteq \mathbb C^n$ is not metrizable for any integer $n~\geqslant~1.$
\end{corollary}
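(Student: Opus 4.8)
The plan is to reduce the statement to Proposition~\ref{p1.6} by invoking the standard implication that metrizability forces first-countability. First I would recall the elementary fact that in any metric space $(Z, d)$ the family of open balls $\{B(z, 1/k) : k \in \mathbb N\}$ forms a countable base of neighborhoods at each point $z$; consequently, every metrizable topological space is first-countable.

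With this observation in hand, I would argue by contraposition. Suppose, contrary to the claim, that $(\Omega, \tau)$ were metrizable. Then, by the remark just made, $\tau$ would be first-countable, which directly contradicts Proposition~\ref{p1.6}. Hence $\tau$ cannot be metrizable, and this conclusion is independent of $n$, since Proposition~\ref{p1.6} is itself stated for every $n \geqslant 1$.

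I do not expect any genuine obstacle here: all the substantive work has already been carried out in showing that every compact subset of $(\Omega, \tau)$ is finite (Proposition~\ref{p1.5}), and thence that $\tau$ fails to be first-countable (Proposition~\ref{p1.6}). The corollary is therefore an immediate consequence of the well-known chain of implications \emph{metrizable} $\Rightarrow$ \emph{first-countable}, and the only care needed is to cite the correct prior results rather than to supply a new argument.
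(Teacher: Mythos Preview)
Your argument is correct and mirrors the paper's proof exactly: both invoke the standard fact that metrizable spaces are first-countable and then appeal to Proposition~\ref{p1.6}. There is nothing to add.
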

\begin{proof}
Since every metrizable topological space is first countable, the corollary follows from Proposition \ref{p1.6}.
\end{proof}

M. Brelot in \cite{Br} considers a fine topology generated by a cone of lower-semicontinuous  functions of the form $f: X\to (-\infty, \infty]$. Every plurisuperharmonic function satisfies these conditions and such functions are just the negative of plurisubharmonic functions. Thus, the pluri-fine topology $\tau$ is an example of fine topologies studied in \cite{Br}.


\section{Separately continuous functions and the first Baire functions in the pluri-fine topology}

The following is a result from Mykhaylyuk's paper~\cite{My2} (see also \cite{My1}).

\begin{lemma}\label{t2.2}
Let $X$ be a topological space and let $X^m$ be a Cartesian product of $m\geqslant 2$ copies of $X$ with the usual product topology. Then for every $(m-1)$-Baire class function $g : X \to\mathbb R $ there is a separately continuous function $f : X^m \to\mathbb R$ such that $f(x, ..., x)=g(x)$ holds for every $x\in X$.
\end{lemma}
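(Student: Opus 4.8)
The plan is to reduce the statement for an arbitrary topological space $X$ to the classical case of a (separable) metrizable space, where it is due to Lebesgue \cite{Le1, Le2} and Hahn \cite{Ha}, and then to pull the resulting construction back to $X$. The key observation is that an $(m-1)$-Baire class function is manufactured from continuous functions by finitely many iterated pointwise limits, and only countably many continuous functions occur in this process. Unwinding the definition of the $(m-1)$-Baire class, one obtains a countable family $(h_j)_{j\in\mathbb N}$ of continuous functions $h_j\colon X\to\mathbb R$ together with a finite scheme of iterated pointwise limits producing $g$. I would then form the continuous map $\Psi\colon X\to\mathbb R^{\mathbb N}$, $\Psi(x)=(h_j(x))_{j\in\mathbb N}$, where $\mathbb R^{\mathbb N}$ carries the product topology, and set $M=\Psi(X)$ with the subspace topology, so that $M$ is separable metrizable.

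The first substantive step is to factor $g$ through $\Psi$. Since every function occurring in the scheme is, at each stage, a pointwise limit of functions from the preceding stage and ultimately of the $h_j$, its value at $x$ depends only on $\Psi(x)$; running the same scheme with the coordinate projections $\mathrm{pr}_j\colon M\to\mathbb R$ (which are continuous) in place of the $h_j$ yields a function $\tilde g\colon M\to\mathbb R$ with $g=\tilde g\circ\Psi$, and $\tilde g$ lies in the $(m-1)$-Baire class on $M$. As $M$ is metrizable, the classical Lebesgue--Hahn theorem provides a separately continuous $\tilde f\colon M^m\to\mathbb R$ whose diagonal is $\tilde g$. I would then set $f=\tilde f\circ(\Psi\times\cdots\times\Psi)\colon X^m\to\mathbb R$. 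Because $\Psi$ is continuous and the $m$-fold map $\Psi\times\cdots\times\Psi$ moves only the $i$-th target coordinate when only the $i$-th source coordinate varies, separate continuity of $\tilde f$ transfers to $f$; and on the diagonal $f(x,\dots,x)=\tilde f(\Psi(x),\dots,\Psi(x))=\tilde g(\Psi(x))=g(x)$, as required.

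The main obstacle, and where essentially all the analytic work sits, is the metrizable case itself. For $m=2$ it amounts to interpolating the continuous approximants of $\tilde g$ along one coordinate using the metric: choosing radii $1=r_0>r_1>\cdots\to0$ and letting $\tilde f(s,u)$ interpolate between consecutive approximants $\tilde g_n(s)$ according to the position of $\mathrm{dist}(s,u)$ among the $r_n$, with $\tilde f(s,s)=\lim_n\tilde g_n(s)=\tilde g(s)$. Continuity in $u$ for fixed $s$, including at $u=s$, is immediate, but continuity in $s$ for fixed $u$ as $s\to u$ is delicate: it is exactly an interchange-of-limits issue, requiring the approximants and the radii to be chosen so that the large-index values $\tilde g_n(s)$ for $s$ near $u$ stay near $\tilde g(u)$. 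This is the technical heart of the classical theorem, and it is precisely what forces the detour through a metric: in a general, non-metrizable space (such as $(\Omega,\tau)$) there need be no continuous gauge of proximity to the diagonal, which is why I route the argument through $\Psi$. The passage from $m=2$ to general $m$ is then handled by induction, realizing each approximant as a diagonal by the inductive hypothesis and gluing in one extra variable by the same interpolation device. Besides this, the only point I expect to demand genuine care is verifying that $\tilde g$ retains the correct Baire class on the possibly non-closed subspace $M\subseteq\mathbb R^{\mathbb N}$.
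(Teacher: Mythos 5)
The paper does not prove this lemma at all---it is quoted as a known result of Mykhaylyuk \cite{My2} (see also \cite{My1})---so any actual argument is necessarily a different route. Your reduction to the metrizable case is sound and genuinely nice: the factorization $g=\tilde g\circ\Psi$ is legitimate (the value of every function in the limit scheme at $x$ is determined by the tuple $(h_j(x))_j$, and every point of $M=\Psi(X)$ is of the form $\Psi(x)$, so the scheme converges on $M$ and exhibits $\tilde g$ as a class-$(m-1)$ function there), and separate continuity does pull back along $\Psi\times\cdots\times\Psi$. Your final worry about $\tilde g$ retaining its Baire class on the non-closed subspace $M$ is unfounded, since the scheme produces $\tilde g$ directly as an iterated limit of restrictions of coordinate projections.

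The genuine gap is in the metrizable case, which you correctly identify as carrying all the analytic weight but do not close. The interpolation you describe---a fixed sequence of radii $1=r_0>r_1>\cdots\to 0$ and affine interpolation between $\tilde g_n(s)$ and $\tilde g_{n+1}(s)$ according to where $\mathrm{dist}(s,u)$ falls---fails as stated: take $\tilde g\equiv 0$ on $\mathbb R$ and let $\tilde g_n$ be a bump of height $1$ centered at $r_n$ with support inside $(r_{n+1},r_{n-1})$; then $\tilde g_n\to 0$ pointwise, but the interpolant satisfies $f(r_n,0)=\tilde g_n(r_n)=1\not\to 0=f(0,0)$, so continuity in the first variable fails on the diagonal. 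No globally fixed radius sequence can work for all centers $u$ simultaneously; the repair requires either point-dependent (continuous) radii or, as in Mykhaylyuk's actual construction, cutoff factors built from the approximants themselves (telescoping $g=g_1+\sum_n(g_{n+1}-g_n)$ against functions of $\max_{k\le n}|g_k(x)-g_k(y)|$), together with a truncation to control the tails. Saying the approximants and radii must be ``chosen so that'' the interchange of limits goes through names the problem rather than solving it---that choice \emph{is} the theorem. Note also that the sources \cite{Le1,Le2,Ha} prove the result for intervals and Euclidean domains, not for an arbitrary separable metrizable subspace $M\subseteq\mathbb R^{\mathbb N}$, so the metrizable case cannot simply be cited from them; and once one cites a reference that does cover it, one may as well cite \cite{My1,My2}, which treat arbitrary topological spaces directly and make the detour through $\Psi$ unnecessary.
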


Let us denote by $t^{m}$ the Thychonoff topology (= product topology) on the product of $m$ copies of the topological space $(\Omega, \tau)$. The topology $t^{m}$ is the coarsest  topology on $\Omega^m$ making all projections $\pi_j:\Omega^m\to \Omega$, $j=1, \ldots, m$ continuous. Lemma~\ref{t2.2} directly implies the following.

\begin{lemma}\label{c2.3}
Let $m \geqslant 2$ be an integer number. For every $(m-1)$-Baire class function $g : \Omega \to\mathbb R$ in the pluri-fine topology $\tau$ there is a separately continuous function $f : \Omega^m\to \mathbb R$ in the Thychonoff topology $t^m$ such that $g$ is the diagonal of $f$.
\end{lemma}

The following theorem gives a ``pluri-fine'' analog of the implication $(i) \Rightarrow (ii)$ from Theorem \ref{t1.1}.

\begin{theorem}\label{t2.1}
Let $\Omega$ be a non-void open subset of\/ $\mathbb C^n$, $n\geqslant 1$ and let $m\geqslant 2$ be an integer number. For every $(m-1)$ Baire class function $g: \Omega\to\mathbb R$, in the pluri-fine topology $\tau$, there is a separately continuous function $f: \Omega^m\to\mathbb R$, in the pluri-fine topology $\tau_{m}$, such that
\begin{equation}\label{eq2.1}
g=f \circ d_m,
\end{equation}
where $d_m$ is the corresponding diagonal mapping.
\end{theorem}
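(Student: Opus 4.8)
The plan is to reduce Theorem~\ref{t2.1} to Lemma~\ref{c2.3}, which already produces a separately continuous diagonal extension $f\colon\Omega^m\to\mathbb R$ with respect to the Tychonoff topology $t^m$. The only gap is that $t^m$ and $\tau_m$ are different topologies on $\Omega^m$, so I must show that separate continuity in the sense of Definition~\ref{d1.2} is the same requirement for both. Since separate continuity depends only on the subspace topology induced on the axis lines
\[
L_j(a)=\{(a_1,\ldots,a_{j-1},x,a_{j+1},\ldots,a_m):x\in\Omega\},\qquad a=(a_1,\ldots,a_m)\in\Omega^m,
\]
it suffices to prove that $\tau_m$ and $t^m$ induce the same subspace topology on every $L_j(a)$.

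First, observe that $\tau_m$ is finer than $t^m$: by Lemma~\ref{l2.4} all projections $\pi_j\colon(\Omega^m,\tau_m)\to(\Omega,\tau)$ are continuous, whereas $t^m$ is by definition the coarsest topology making these projections continuous. Hence on each $L_j(a)$ the subspace topology inherited from $\tau_m$ is finer than the one inherited from $t^m$. On the other hand, a standard property of product topologies shows that the map $x\mapsto(a_1,\ldots,x,\ldots,a_m)$ is a homeomorphism of $(\Omega,\tau)$ onto $L_j(a)$ equipped with the subspace topology from $t^m$. Thus the subspace-$t^m$ topology on $L_j(a)$ is a copy of $\tau$, and the subspace-$\tau_m$ topology is at least as fine.

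For the reverse inclusion I would use that $\tau_m$ is the initial topology on $\Omega^m\subseteq\mathbb C^{mn}$ generated by the psh functions, so its subspace topology on $L_j(a)$ is the initial topology generated by the restrictions $u|_{L_j(a)}$. The key step is to check that each such restriction, read as a function of $x\in\Omega$, is psh on $\Omega$. This follows directly from Definition~\ref{d1.3}: writing $u|_{L_j(a)}(\alpha+\beta z)=u(A+Bz)$ with $A=(a_1,\ldots,\alpha,\ldots,a_m)$ and $B=(0,\ldots,\beta,\ldots,0)$ (the nonzero block sitting in the $j$-th slot), the subharmonicity of $z\mapsto u(A+Bz)$ is inherited from the psh property of $u$, while upper semicontinuity is inherited from the continuity of the affine embedding. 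Consequently $u|_{L_j(a)}$ is psh, hence $\tau$-continuous, so the subspace-$\tau_m$ topology on $L_j(a)$, being the coarsest one making all these restrictions continuous, is coarser than $\tau$.

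Combining the two inclusions yields that $\tau_m$ and $t^m$ induce exactly the same subspace topology, namely a copy of $\tau$, on each axis line $L_j(a)$. Therefore a function $\Omega^m\to\mathbb R$ is separately continuous for $\tau_m$ if and only if it is separately continuous for $t^m$, and the function $f$ furnished by Lemma~\ref{c2.3} already satisfies $g=f\circ d_m$; this proves the theorem. The only nonroutine point is the psh-restriction step above, but it is a direct consequence of the definition, since passing to $L_j(a)$ amounts to restricting $u$ to a complex affine $n$-dimensional subspace of $\mathbb C^{mn}$.
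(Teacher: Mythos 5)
Your proof is correct and its first paragraph is exactly the paper's argument: Lemma~\ref{c2.3} supplies $f$ separately continuous for $t^m$, and since Lemma~\ref{l2.4} shows that $\tau_{m}$ is finer than $t^{m}$, the restrictions of $f$ to the axis lines remain continuous when the subspace topology is refined, which already yields separate continuity for $\tau_{m}$. The second half of your argument --- showing via the psh-restriction step that the two subspace topologies on each $L_j(a)$ actually coincide --- is correct but superfluous for this theorem, since only the one inclusion $t^{m}\subseteq\tau_{m}$ is needed.
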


\begin{proof}
By Lemma~\ref{c2.3}, it is sufficient to show that $t^{m}$ is weaker than $\tau_{m}$. From the definition of Thychonoff topology it follows at ones that $t^{m}$ is weaker than $\tau_{m}$ if and only if all projections $\pi_j\colon \Omega ^m \to \Omega$, $j\in \{1,...,m\}$, are continuous mappings on $(\Omega^m, \tau_{m})$. The continuity of these projections was proved in Lemma~\ref{l2.4}.
\end{proof}

\begin{proposition}\label{p2.6}
The equality
\begin{equation}\label{eq2.4}
H_{1}^{*}(X, \mathbb R)=B_{1}(X, \mathbb R)
\end{equation}
holds for every topological space $X$.
\end{proposition}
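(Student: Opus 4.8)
The plan is to establish the two inclusions $B_1(X,\mathbb R)\subseteq H_1^*(X,\mathbb R)$ and $H_1^*(X,\mathbb R)\subseteq B_1(X,\mathbb R)$ separately, working throughout with the lattice of functionally closed sets (zero sets). Two elementary facts, valid in an arbitrary topological space, will carry the argument. First, a countable intersection and a finite union of functionally closed sets is again functionally closed: if $F_m=g_m^{-1}(0)$ with $g_m\colon X\to I$, then $\sum_m 2^{-m}g_m$ is continuous with zero set $\bigcap_m F_m$, while products handle finite unions; consequently $F_\sigma^*$ is stable under countable unions and finite intersections. Second, two disjoint functionally closed sets $Z_1=u^{-1}(0)$, $Z_2=v^{-1}(0)$ are completely separated, since $u/(u+v)$ is continuous, equal to $0$ on $Z_1$ and $1$ on $Z_2$. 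This second fact replaces Urysohn's lemma and is exactly why no separation axiom on $X$ is needed.

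For $B_1\subseteq H_1^*$: given $f=\lim_n f_n$ with $f_n$ continuous, it suffices by \eqref{eq1.2} to check that $f^{-1}(G)\in F_\sigma^*$ for $G$ an open interval, and hence for $G=(a,\infty)$. The identity
\[
f^{-1}((a,\infty))=\bigcup_{k,N\in\mathbb N}\bigcap_{n\ge N} f_n^{-1}\big([a+\tfrac1k,\infty)\big)
\]
exhibits the set as a countable union of countable intersections of the sets $f_n^{-1}([a+1/k,\infty))$, each of which is functionally closed because $f_n$ is continuous and $[a+1/k,\infty)$ is closed in $\mathbb R$ (compose $f_n$ with the distance to this set). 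By the first fact the right-hand side lies in $F_\sigma^*$; intersecting with the analogous set for $(-\infty,b)$ and taking a countable union over the components of a general open $G$ keeps us in $F_\sigma^*$. This direction is routine.

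The substance of the proposition is the converse $H_1^*\subseteq B_1$, and this is where I expect the real work. I would first reduce to a bounded function: composing with a homeomorphism $\mathbb R\to(0,1)$ changes neither membership in $H_1^*$ nor in $B_1$, so I may assume $f\colon X\to[0,1]$. The crucial ingredient is a separation statement: \emph{any two disjoint members of $G_\delta^*$ can be separated by an ambiguous set}, i.e.\ a set in $F_\sigma^*\cap G_\delta^*$. I would deduce this from the reduction property of $F_\sigma^*$ — given $U,V\in F_\sigma^*$ there are disjoint $U^*\subseteq U$, $V^*\subseteq V$ in $F_\sigma^*$ with $U^*\cup V^*=U\cup V$ — which is proved by the usual ``first claim wins'' bookkeeping on the zero sets representing $U$ and $V$, the first fact guaranteeing that each resulting piece remains in $F_\sigma^*$. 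Applying reduction to the complements of two disjoint $G_\delta^*$ sets then produces the desired ambiguous separator.

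Granting the separation lemma, the proof closes as follows. For each mesh $n$ and each $k\in\{1,\dots,n-1\}$ the sets $\{f\ge k/n\}$ and $\{f\le (k-1)/n\}$ are disjoint members of $G_\delta^*$, so there is an ambiguous set $A_{n,k}$ with $\{f\ge k/n\}\subseteq A_{n,k}\subseteq\{f>(k-1)/n\}$. An ambiguous set $A$ has $\chi_A\in B_1$: writing $A=\bigcup_m F_m$ and $X\setminus A=\bigcup_m G_m$ with $F_m,G_m$ increasing and functionally closed, the second fact separates $F_m$ from $G_m$ by continuous $h_m$, and $h_m\to\chi_A$ pointwise. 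Hence $h_n=\tfrac1n\sum_{k=1}^{n-1}\chi_{A_{n,k}}\in B_1$, and a direct count gives $\lvert h_n(x)-f(x)\rvert\le 1/n$ for every $x$, so $h_n\to f$ uniformly. Finally, a uniform limit of first Baire class functions is of the first Baire class — one passes to a subsequence with $\lVert h_{n_{j+1}}-h_{n_j}\rVert\le 2^{-j}$, writes $f$ as the telescoping series $h_{n_1}+\sum_j(h_{n_{j+1}}-h_{n_j})$, realises each difference as a pointwise limit of continuous functions truncated to the same bound, and sums over the continuity index using term-by-term passage to the limit dominated by a geometric series. The main obstacle is precisely this separation lemma together with the realisation that one must pass through ambiguous sets: the naive rounding $\tfrac1n\sum_k\chi_{\{f>k/n\}}$ fails, since $\chi_A$ for a set $A$ that is merely $F_\sigma^*$ need not be of the first Baire class.
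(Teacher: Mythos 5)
Your proof is correct, but for the substantive inclusion it takes a genuinely different route from the paper. The easy direction $B_1(X,\mathbb R)\subseteq H_1^*(X,\mathbb R)$ is handled in essentially the same way in both: you write $f^{-1}((a,\infty))$ as $\bigcup_{k,N}\bigcap_{n\ge N}f_n^{-1}([a+\tfrac1k,\infty))$, while the paper writes the mirror-image identity for $f^{-1}((-\infty,a))$ with a decreasing sequence $\varepsilon_m\to 0$; both then pass to general open sets via finite intersections and countable unions of functionally closed sets. The difference is in the converse $H_1^*\subseteq B_1$: the paper does not prove it at all, but simply invokes the Karlova--Mykhajlyuk theorem that every $f\in H_1^*(X,Y)$ with separable range lies in $B_1(X,Y)$ whenever $Y$ is arcwise connected, locally arcwise connected and metrizable, applied to $Y=\mathbb R$. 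You instead give a self-contained Lebesgue--Hausdorff-style argument: the reduction property of $F_\sigma^*$, separation of disjoint $G_\delta^*$ sets by ambiguous sets, the observation that $\chi_A\in B_1$ for ambiguous $A$ (using that disjoint zero sets are completely separated, which correctly replaces any separation axiom on $X$), the uniform approximation $h_n=\tfrac1n\sum_k\chi_{A_{n,k}}$ with $\lvert h_n-f\rvert\le 1/n$, and closure of $B_1$ under uniform limits via the telescoping series. All of these steps check out, including the edge cases in the counting estimate and the truncation needed in the uniform-limit lemma. What the paper's citation buys is brevity and a more general target space; what your argument buys is a complete, elementary proof specific to $\mathbb R$ (it exploits the linear order for the level-set approximation), together with the genuinely useful remark that the naive rounding $\tfrac1n\sum_k\chi_{\{f>k/n\}}$ fails because $\chi_A$ for a merely $F_\sigma^*$ set need not be of the first Baire class.
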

\begin{proof}
Let $X$ be a topological space and let $Y$ be an arcwise connected, locally arcwise connected, metrizable space. Then every $f\in H_{1}^{*}(X, Y)$, with separable $f(X)$, belongs to $B_{1}(X, Y )$ (see \cite{KaMy}). Hence $H_{1}^{*}(X, \mathbb R)\subseteq B_{1}(X, \mathbb R)$ holds.

It still remains to make sure that
\begin{equation}\label{eq2.5}
H_{1}^{*}(X, \mathbb R)\supseteq B_{1}(X, \mathbb R)
\end{equation}
is valid for every topological space $X$. The following is a simple modification of well known arguments.

Let $f \in B_{1}(X, \mathbb R)$. Consider a sequence $(f_n)_{n\in \mathbb N}$ of continuous real valued functions on $X$ such that the limit relation $f(x) = \mathop{\lim}\limits_{n\to\infty}f_{n}(x)$ holds for every $x\in X$. Let $(\varepsilon_{m})_{m\in\mathbb N}$ be a strictly decreasing sequence of positive real numbers with
\begin{equation}\label{eq2.6}
\lim_{m\to\infty}\varepsilon_{m}=0.
\end{equation}
Let us prove the equality
\begin{equation}\label{eq2.7}
f^{-1}(-\infty, a)=\bigcup_{m=1}^{\infty}\bigcup_{p=1}^{\infty}\left(\bigcap_{k\geqslant p}^{\infty}f^{-1}_{k}(-\infty, a-\varepsilon_{m}])\right)
\end{equation}
for every $a\in \mathbb R.$ It is sufficient to show that for every $x\in f^{-1}(-\infty, a)$ there are $m$, $p\in\mathbb N$ such that
\begin{equation}\label{eq2.8}
x\in\bigcap_{k\geqslant p}^{\infty}f^{-1}_{k}(-\infty, a-\varepsilon_{m}].
\end{equation}
Let $x\in f^{-1}(-\infty, a)$. Then we have $\mathop{\lim}\limits_{n\to\infty}f_{n}(x)<a$. The last inequality and \eqref{eq2.7} imply   $\mathop{\lim}\limits_{n\to\infty}f_{n}(x)<a-\varepsilon_{m_1}$ for some $m_1$. Consequently, there is $p\in\mathbb N$ such that $f_{n}(x)<a-\varepsilon_{m_1}$ for all $n\geqslant p$, that is
\begin{equation*}
x\in\bigcap_{k\geqslant p}^{\infty}f^{-1}_{k}(-\infty, a-\varepsilon_{m}).
\end{equation*}
Since the sequence $(\varepsilon_{m})_{m\in\mathbb N}$ is strictly decreasing, the inclusion $$(-\infty, a-\varepsilon_{m})\subseteq (-\infty, a-\varepsilon_{m+1}]$$ follows for every $m$. Hence \eqref{eq2.8} holds with $m = m_{1} + 1$.

Note now that $f^{-1}_{k}(-\infty, a-\varepsilon_{m}]$ is functionally closed as a zero-set of the continuous function
\begin{equation*}
g_{k, m, a}(x):=\min(\max(f(x)-f(a-\varepsilon_{m}); 0); 1).
\end{equation*}
Since each countable intersection of functionally closed sets is functionally closed \cite[p. 42--43]{En}, equality \eqref{eq2.7} implies $f^{-1}(-\infty, a)\in F_{\sigma}^{*}$. Moreover, if
$g = -f$ and $b = -a$, then
$f^{-1}(a, \infty)=g^{-1}(-\infty, b)$ holds.
Hence, the set $f^{-1}(a, \infty)$ belongs to $F_{\sigma}^{*}$.

We can now easily prove \eqref{eq2.5}. Indeed, it is sufficient to show that $\{x: a<f(x)<b\}$ is a countable union of functionally closed sets for every $f\in B_{1}(X, \mathbb R)$ and every interval $(a, b)\subseteq \mathbb R$. Using \eqref{eq2.7}, we obtain
\begin{equation}\label{eq2.9}
f^{-1}(a, b)=\left(\bigcup_{i=1}^{\infty}H_{i}\right)\cap\left(\bigcup_{i=1}^{\infty}F_{i}\right)=\bigcup_{i, j=1}^{\infty}(H_{i}\cap F_{j}),
\end{equation}
where all $H_i$ and $F_j$ are functionally closed. It was mentioned above that the countable intersection of functionally closed sets is functionally closed. Hence, by \eqref{eq2.9}, $f^{-1}(a, b)\in F_{\sigma}^{*}$, so that \eqref{eq2.5} follows.
\end{proof}

\begin{corollary}\label{c2.5*}
The equality $B_1(\Omega,\mathbb R)=H^*_1(\Omega, \mathbb R)$ holds if a non-void open set $\Omega \subseteq \mathbb C^n$ is endowed by the pluri-fine topology $\tau$.
\end{corollary}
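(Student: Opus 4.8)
The plan is to observe that this corollary is nothing more than a specialization of Proposition~\ref{p2.6} to the particular topological space $(\Omega, \tau)$. Indeed, Proposition~\ref{p2.6} asserts the equality
\begin{equation*}
H_{1}^{*}(X, \mathbb R) = B_{1}(X, \mathbb R)
\end{equation*}
for \emph{every} topological space $X$, with no hypotheses whatsoever on $X$ beyond its being a topological space. Since $\Omega$ equipped with the pluri-fine topology $\tau$ is a topological space, I would simply invoke Proposition~\ref{p2.6} with $X = (\Omega, \tau)$ and read off the desired equality.

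The only point worth verifying is that the two inclusions underlying Proposition~\ref{p2.6} genuinely apply in this setting. For the inclusion $H_{1}^{*}(\Omega, \mathbb R) \subseteq B_{1}(\Omega, \mathbb R)$, the argument uses that the target space is arcwise connected, locally arcwise connected, metrizable, and that the image is separable; all of these hold because the target is $\mathbb R$, and each is a property of the codomain rather than the domain, so the nature of the domain topology $\tau$ is irrelevant (separability of $f(\Omega)$ follows from $f(\Omega)\subseteq\mathbb R$ and the separability of $\mathbb R$ as a metric space). For the reverse inclusion $B_{1}(\Omega, \mathbb R) \supseteq$ via the approximation identity~\eqref{eq2.7}, the proof rests only on elementary set-theoretic manipulations of preimages together with the stability of functionally closed sets under countable intersection, none of which impose any constraint on the domain. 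Hence both inclusions transfer verbatim to $(\Omega, \tau)$.

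Because the substantive work has already been carried out in full generality in Proposition~\ref{p2.6}, I do not expect any genuine obstacle here. There is no step at which a special feature of the pluri-fine topology---its non-metrizability (Corollary~\ref{c1.7}), the finiteness of its compact sets (Proposition~\ref{p1.5}), or its failure to be first countable (Proposition~\ref{p1.6})---could interfere, precisely because Proposition~\ref{p2.6} was stated and proved without any such assumptions. The corollary is therefore immediate upon instantiation.
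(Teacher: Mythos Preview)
Your proposal is correct and matches the paper's approach exactly: the paper states Corollary~\ref{c2.5*} with no proof, treating it as an immediate instantiation of Proposition~\ref{p2.6} with $X=(\Omega,\tau)$. Your additional remarks verifying that the hypotheses carry over are accurate but unnecessary, since Proposition~\ref{p2.6} imposes no conditions on the domain.
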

This corollary and Theorem~\ref{t2.2} imply the following result.

\begin{theorem}\label{t2.5}
Let $\Omega$ be a non-void open subset of $\mathbb C^n$, $n\geqslant 1$ and let $g : \Omega\to\mathbb R$ be a first functional Lebesgue class function on $(\Omega, \tau)$. Then there is a separately continuous function $f : \Omega^2 \to\mathbb R$ in the pluri-fine topology $\tau_{2}$ on $\Omega^2$ such that $g$ is the diagonal of $f$.
\end{theorem}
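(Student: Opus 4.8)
The plan is to obtain the statement as an immediate consequence of the two results that precede it, specialized to $m=2$; no genuinely new argument is needed. First I would invoke Corollary~\ref{c2.5*}, the pluri-fine instance of Proposition~\ref{p2.6}: since $g$ is a first functional Lebesgue class function on $(\Omega,\tau)$, the equality $B_1(\Omega,\mathbb R)=H_1^*(\Omega,\mathbb R)$ gives $g\in B_1(\Omega,\mathbb R)$, so $g$ is a first Baire class function in the pluri-fine topology $\tau$.

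Next I would apply Theorem~\ref{t2.1} with $m=2$. A first Baire class function is exactly an $(m-1)$-Baire class function when $m=2$, so the theorem produces a separately continuous $f\colon\Omega^2\to\mathbb R$ in the pluri-fine topology $\tau_2$ with $g=f\circ d_2$; that is, $g$ is the diagonal of $f$. This finishes the proof.

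The only point that deserves a second glance---and it is already secured inside the proof of Theorem~\ref{t2.1}---is the direction of the topology comparison. Lemma~\ref{l2.4} makes all projections $\pi_j$ continuous on $(\Omega^2,\tau_2)$, hence the Tychonoff topology $t^2$ is coarser than $\tau_2$; on each slice the corresponding subspace topologies inherit this coarser/finer relation, so the Tychonoff-separate-continuity supplied by Lemma~\ref{c2.3} automatically upgrades to $\tau_2$-separate-continuity, because continuity of a map is preserved when the topology on its domain is refined. This upgrade is exactly what makes the present conclusion stronger than the classical diagonal theorem, and it is where the real work has already been done.
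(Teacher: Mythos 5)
Your proof is correct and matches the paper's own derivation: the paper likewise obtains Theorem~\ref{t2.5} by combining Corollary~\ref{c2.5*} (to pass from $H_1^*(\Omega,\mathbb R)$ to $B_1(\Omega,\mathbb R)$) with the $m=2$ case of Theorem~\ref{t2.1}, whose proof contains exactly the topology-comparison upgrade ($t^2$ coarser than $\tau_2$ via Lemma~\ref{l2.4}) that you spell out. The paper's text actually cites ``Theorem~\ref{t2.2}'' at this point, which is evidently a slip for Theorem~\ref{t2.1}; the latter is the result you correctly invoke.
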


The proof of the next proposition  is a variantion of Luke\v{s}--Zaji\v{c}ek's method from \cite{LZ1, LZ2}.

\begin{proposition}\label{p2.7}
Let $X$ be a topological space. Then, for every $f : X \to \mathbb R$, the following conditions are equivalent.
\begin{enumerate}
\item The function $f$ belongs to $B_{1}(X, \mathbb R)$.
\item For each couple of real numbers $a, b$ with $a< b$ there are $H_1, H_2\in F_{\sigma}^{*}$ such that
\begin{equation}\label{eq2.10}
f^{-1}(a, +\infty)\supseteq H_{1}\supseteq f^{-1}(b, +\infty),
\end{equation}
\begin{equation}\label{eq2.11}
f^{-1}(-\infty, b)\supseteq H_{2}\supseteq f^{-1}(-\infty, a).
\end{equation}
\end{enumerate}
 \end{proposition}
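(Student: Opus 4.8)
The plan is to prove the two implications separately, leaning on Corollary~\ref{c2.5*} (equivalently Proposition~\ref{p2.6}), which identifies $B_1(X,\mathbb R)$ with $H_1^*(X,\mathbb R)$. Because of that identification, membership of $f$ in $B_1(X,\mathbb R)$ can be tested by the condition in \eqref{eq1.2}, namely that $f^{-1}(G)\in F_\sigma^*$ for every open $G\subseteq\mathbb R$. So throughout I would work with preimages of open half-lines and intervals rather than with approximating sequences.

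For the implication (i) $\Rightarrow$ (ii) the argument is essentially free. Assuming $f\in B_1(X,\mathbb R)=H_1^*(X,\mathbb R)$, every preimage of an open subset of $\mathbb R$ lies in $F_\sigma^*$; in particular, for the given pair $a<b$, both $f^{-1}(b,+\infty)$ and $f^{-1}(-\infty,a)$ belong to $F_\sigma^*$. I would simply set $H_1:=f^{-1}(b,+\infty)$ and $H_2:=f^{-1}(-\infty,a)$. Since $a<b$ forces $f^{-1}(a,+\infty)\supseteq f^{-1}(b,+\infty)$ and $f^{-1}(-\infty,b)\supseteq f^{-1}(-\infty,a)$, the inclusions \eqref{eq2.10} and \eqref{eq2.11} hold at once.

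The substantive direction is (ii) $\Rightarrow$ (i), and here I would use the squeezing idea underlying the Luke\v{s}--Zaji\v{c}ek method. Fix $a\in\mathbb R$. For each $n\in\mathbb N$, apply hypothesis (ii) to the pair $a<a+\tfrac1n$ to obtain $H_1^{(n)}\in F_\sigma^*$ with
\[
f^{-1}(a,+\infty)\supseteq H_1^{(n)}\supseteq f^{-1}\bigl(a+\tfrac1n,+\infty\bigr).
\]
Because $f(x)>a$ holds exactly when $f(x)>a+\tfrac1n$ for some $n$, one has $\bigcup_{n}f^{-1}\bigl(a+\tfrac1n,+\infty\bigr)=f^{-1}(a,+\infty)$, so the union $\bigcup_{n}H_1^{(n)}$ is trapped between two copies of $f^{-1}(a,+\infty)$ and therefore equals it. As a countable union of members of $F_\sigma^*$, the set $f^{-1}(a,+\infty)$ lies in $F_\sigma^*$. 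The symmetric argument, applied to the pairs $b-\tfrac1n<b$ and to the sets $H_2$, gives $f^{-1}(-\infty,b)\in F_\sigma^*$ for every $b$.

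It then remains to upgrade these half-line preimages to arbitrary open preimages, which is where the only real bookkeeping lies. I would record that $F_\sigma^*$ is closed under finite intersections, since a finite intersection of functionally closed sets is functionally closed and $(\bigcup_i A_i)\cap(\bigcup_j B_j)=\bigcup_{i,j}(A_i\cap B_j)$, and closed under countable unions by definition. Hence $f^{-1}(a,b)=f^{-1}(a,+\infty)\cap f^{-1}(-\infty,b)\in F_\sigma^*$, and expressing an arbitrary open $G\subseteq\mathbb R$ as a countable union of open intervals yields $f^{-1}(G)\in F_\sigma^*$. By Corollary~\ref{c2.5*} this means $f\in H_1^*(X,\mathbb R)=B_1(X,\mathbb R)$, which closes the loop. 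I do not expect a genuine obstacle: the squeezing step is short, and the crucial preparatory move is precisely the reduction, via Corollary~\ref{c2.5*}, of the notion $B_1$ to the verifiable condition $f^{-1}(\text{open})\in F_\sigma^*$.
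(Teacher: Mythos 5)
Your proposal is correct and follows essentially the same route as the paper: both directions rest on the identification $B_1(X,\mathbb R)=H_1^*(X,\mathbb R)$ from Proposition~\ref{p2.6}, and the key step (ii)~$\Rightarrow$~(i) is the same squeezing argument, sandwiching a countable union of the sets $H^{(n)}\in F_\sigma^*$ between two copies of $f^{-1}(a,+\infty)$ and then passing to intervals and general open sets via closure of $F_\sigma^*$ under finite intersections and countable unions. The only cosmetic difference is that you apply the hypothesis to the pairs $a<a+\tfrac1n$ while the paper uses $a+\tfrac1{m+1}<a+\tfrac1m$; both yield the same exhaustion of $f^{-1}(a,+\infty)$.
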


\begin{proof}
The implication $\textrm{(i)}\Rightarrow\textrm{(ii)}$ follows from \eqref{eq2.4} and \eqref{eq1.2}. Let us prove $\textrm{(ii)}\Rightarrow\textrm{(i)}$. Using \eqref{eq2.5} and \eqref{eq2.9}, it is easy to see that we need only to make sure the statements
\begin{equation*}
f^{-1}(a, \infty)\in F_{\sigma}^{*}\quad\text{and}\quad f^{-1}(-\infty, a)\in F_{\sigma}^{*}
\end{equation*}
for every $a\in\mathbb R.$ Suppose \eqref{eq2.10} holds. Then, for every $m\in\mathbb N$, there is $H^{m}\in F_{\sigma}^{*}$ such that
\begin{equation*}
f^{-1}\left(a+\frac{1}{m}, +\infty\right)\subseteq H^{m}\subseteq f^{-1}\left(a+\frac{1}{m+1}, +\infty\right).
\end{equation*}
Consequently,
\begin{equation*}
f^{-1}(a, \infty)=\bigcup_{m=1}^{\infty}f^{-1}\left(a+\frac{1}{m}, +\infty\right)\subseteq\bigcup_{m=1}^{\infty}H^{m}
\end{equation*}
\begin{equation*}
\subseteq \bigcup_{m=1}^{\infty}f^{-1}\left(a+\frac{1}{m+1}, +\infty\right)=f^{-1}(a, \infty).
\end{equation*}
Thus,
\begin{equation*}
f^{-1}(a, \infty)=\bigcup_{m=1}^{\infty}H^{m}.
\end{equation*}
It implies  $f^{-1}(a, \infty)\in F_{\sigma}^{*},$ because every countable union of sets from $F_{\sigma}^{*}$ belongs to $F_{\sigma}^{*}.$

Similarly, using \eqref{eq2.11}, we can prove that $f^{-1}(-\infty, a)\in F_{\sigma}^{*}.$ The implication $\textrm{(ii)}\Rightarrow\textrm{(i)}$ follows.
\end{proof}
In the following corollary we consider the classes $B_1(\Omega,\mathbb R)$ and $F^*_{\sigma}$ with respect to the pluri-fine topology $\tau$ on $\Omega$.

\begin{corollary}\label{c2.8}
Let $\Omega$ be a non-void open subset of $\mathbb C^n$, $n\geqslant 1$, and let $f$ be a real valued function on $\Omega$. Then $f$ belongs to $B_1(\Omega,\mathbb R)$ if and only if, for each couple $a$, $b \in \mathbb R$, with $a<b$, double inclusions \eqref{eq2.10} and \eqref{eq2.11} hold for some $H_1$, $H_2 \in F^*_{\sigma}$.
\end{corollary}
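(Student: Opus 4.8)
The plan is to recognize Corollary~\ref{c2.8} as nothing more than the specialization of Proposition~\ref{p2.7} to a particular choice of topological space. Proposition~\ref{p2.7} establishes the equivalence of its conditions (i) and (ii) for an \emph{arbitrary} topological space $X$, imposing no restriction on $X$ whatsoever. The pluri-fine topology $\tau$ on $\Omega$ is a genuine topology --- by definition it is the coarsest topology rendering all psh functions continuous --- so the pair $(\Omega,\tau)$ is a legitimate topological space. I would therefore simply apply Proposition~\ref{p2.7} with $X=(\Omega,\tau)$.

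The only point requiring attention is that the symbols appearing in the corollary are interpreted consistently with those in Proposition~\ref{p2.7}. By the convention announced immediately before the corollary, the class $B_1(\Omega,\mathbb R)$ denotes the first Baire class with respect to $\tau$, and $F^*_{\sigma}$ denotes the family of countable unions of functionally closed subsets of $(\Omega,\tau)$. The preimages $f^{-1}(a,+\infty)$, $f^{-1}(b,+\infty)$, $f^{-1}(-\infty,b)$ and $f^{-1}(-\infty,a)$ are purely set-theoretic objects and do not depend on the choice of topology; only the requirement that the auxiliary sets $H_1$ and $H_2$ lie in $F^*_{\sigma}$ carries topological content, and this content is exactly the one used in Proposition~\ref{p2.7}. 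Hence condition (i) of the corollary coincides with condition (i) of Proposition~\ref{p2.7} for $X=(\Omega,\tau)$, and likewise condition (ii) coincides with condition (ii), including the double inclusions \eqref{eq2.10} and \eqref{eq2.11}.

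Once this identification is made, the desired equivalence is immediate from Proposition~\ref{p2.7}. I do not anticipate any genuine obstacle: the entire argument has already been carried out at the level of general topological spaces, and the pluri-fine setting enters only through the trivial verification that $(\Omega,\tau)$ is a topological space, which is built into the definition of $\tau$. In this sense the corollary is a direct instance of the general proposition rather than a statement requiring an independent proof, and the one-line argument ``apply Proposition~\ref{p2.7} to $X=(\Omega,\tau)$'' is the whole of it.
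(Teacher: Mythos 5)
Your proposal is correct and matches the paper's (implicit) argument: the paper states Corollary~\ref{c2.8} without proof precisely because it is the instance of Proposition~\ref{p2.7} with $X=(\Omega,\tau)$, as signalled by the remark preceding the corollary that $B_1(\Omega,\mathbb R)$ and $F^*_{\sigma}$ are taken with respect to $\tau$. Nothing further is needed.
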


\section*{Acknowledgment} The research of the first author was supported by a grant received from TUBITAK within 2221-Fellowship Programme for Visi\-ting Scientists and Scientists on Sabbatical Leave and as a part of EUMLS project with grant agreement PIRSES-GA-2011-295164.

\section*{References}

\end{document}